\newcommand{\be}{\begin{equation}}
\newcommand{\ee}{\end{equation}}
\newcommand{\dalign}[1]{\[\begin{aligned} #1 \end{aligned}\]}
\newcommand{\euB}{\EuScript{B}}
\newcommand{\er}{\mathrm{e}}
\newcommand{\one}{\mathbb{1}}
\newcommand{\ochi}{{\overline\chi}}
\newcommand{\opsi}{{\overline\psi}}
\newcommand{\otheta}{{\overline\vartheta}}
\title[GRH from zeros of a single $L$-function]
{The Generalized Riemann Hypothesis\\ from zeros of
a single $L$-function}
\author[W.~Banks]{William Banks}
\address{Department of Mathematics, 
         University of Missouri, 
         Columbia MO 65211, USA.}
\email{bankswd@missouri.edu}
\date{\today}
\begin{document}

\begin{abstract}
For each primitive Dirichlet character $\chi$,
a hypothesis ${\tt GRH}^\dagger[\chi]$ is formulated
in terms of zeros of the associated $L$-function $L(s,\chi)$.
It is shown that for any such character, 
${\tt GRH}^\dagger[\chi]$ is equivalent to the Generalized
Riemann Hypothesis.
\end{abstract}

\thanks{MSC Numbers: Primary: 11M06, 11M26; Secondary: 11M20.}

\thanks{Keywords: Generalized Riemann Hypothesis, zeta function, 
 Dirichlet $L$-function, zeros.}

\thanks{Data Availability Statement: Data sharing not applicable to this article as no datasets
were generated or analysed during the current study.}

\thanks{Potential Conflicts of Interest: NONE}

\thanks{Research Involving Human Participants and/or Animals: NONE}

\maketitle



\centerline{\it Dedicated to Hugh Montgomery and Bob Vaughan}

\vskip0.4in
\section{Introduction}
\label{sec:intro}

An old result of Sprind\v zuk~\cite{Sprind1,Sprind2}
(which he obtained by developing ideas of Linnik~\cite{Linnik})
states that, under the Riemann Hypothesis (RH), the Generalized Riemann
Hypothesis (GRH) holds for \emph{all} Dirichlet $L$-functions provided some
suitable conditions on the vertical distribution of the zeros
of $\zeta(s)$ are met. More precisely, the \emph{Linnik–Sprind\v zuk theorem}
asserts that GRH is equivalent to the validity of both RH and the hypothesis that,
for any rational number $\xi\defeq h/k$ with $0<|h|\le k/2$ and $(h,k)=1$,
and any real $\eps>0$, the bound
\[
\sum_{\rho=\frac12+i\gamma}|\gamma|^{i\gamma}
\er^{-i\gamma-\pi|\gamma|/2}(x+2\pi i\xi)^{-\rho}
+\frac{\mu(k)}{\phi(k)}\frac{1}{x\sqrt{2\pi}}
\,\mathop{\ll}\limits_{\xi,\eps}\,x^{-1/2-\eps}
\]
holds for $x\to 0^+$. Similar results 
have been attained by Fujii~\cite{Fujii1,Fujii2,Fujii3},
Suzuki~\cite{Suzuki}, Kaczorowski and Perelli~\cite{KacPer},
and the author~\cite{Banks1,Banks2}.

In the present paper, we establish an analogous result in which
$\zeta(s)$ is replaced by the Dirichlet $L$-function $L(s,\chi)$
attached to an arbitrary primitive character $\chi$.
To formulate the theorem, we introduce some notation.
In what follows, $C_c^\infty(\R^+)$ denotes the space of smooth
functions $\euB:\R^+\to\C$ with compact support in $\R^+$.
As usual, we write $\e(u)\defeq\er^{2\pi iu}$ for all $u\in\R$.
Let $\chi$ be a primitive character modulo $q$, and put
\be\label{eq:red}
\kappa_\chi\defeq\begin{cases}
0&\quad\hbox{if $\chi(-1)=+1$,}\\
1&\quad\hbox{if $\chi(-1)=-1$,}\\
\end{cases}
\quad\tau(\chi)\defeq\sum_{a\bmod q}\chi(a)\e(a/q),
\quad\epsilon_\chi\defeq\frac{\tau(\chi)}{i^{\kappa_\chi}\sqrt{q}}.
\ee
We recall the asymmetric form of the functional equation
\[
L(s,\chi)=\cX_\chi(s)L(1-s,\ochi),
\]
where
\be\label{eq:barchetta}
\cX_\chi(s)\defeq\epsilon_\chi2^s\pi^{s-1}q^{1/2-s}
\Gamma(1-s)\sin\tfrac{\pi}{2}(s+\kappa_\chi).
\ee
In particular, if $\one$ is
 the trivial character given by $\one(n)=1$
for all $n\in\Z$, then the functional equation
$\zeta(s)=\cX_{\one}(s)\zeta(1-s)$ holds with
\[
\cX_{\one}(s)\defeq 2^s\pi^{s-1}\Gamma(1-s)\sin\tfrac{\pi s}{2}.
\]

Finally, for a fixed primitive character $\chi$ modulo $q$, consider
the following two hypotheses concerning the zeros of $L(s,\chi)$.
The first hypothesis is

\bigskip\noindent{\sc Hypothesis ${\tt GRH}[\chi]$}:
{\it If $L(\beta+i\gamma,\chi)=0$ and $\beta>0$, then $\beta=\tfrac12$.}

\bigskip\noindent Note that GRH is equivalent to the assertion
that ${\tt GRH}[\chi]$ holds for all primitive
characters $\chi$.  The second (stronger) hypothesis is

\bigskip\noindent{\sc Hypothesis ${\tt GRH}^\dagger[\chi]$}:
{\it Hypothesis ${\tt GRH}[\chi]$ is true, and for any rational number
$\xi\defeq h/k$ with $h,k>0$
and $(h,k)=1$, any $\euB\in C_c^\infty(\R^+)$, and any $\eps>0$, 
the bound
\be\label{eq:superbound}
\ssum{\rho=\frac12+i\gamma}\xi^{-\rho}
\cX_\ochi(1-\rho)\euB\Big(\frac{\gamma}{2\pi\xi X}\Big)
+C_\euB\widetilde C_{\chi,\xi}X
\,\mathop{\ll}\limits_{\chi,\xi,\euB,\eps}\,X^{1/2+\eps}
\ee
holds for $X\to\infty$, where
\be\label{eq:Cconsts}
C_\euB\defeq\int_0^\infty\euB(u)\,du,\qquad
\widetilde C_{\chi,\xi}\defeq\begin{cases}
\displaystyle{\frac{\ochi(h)\chi(k)\mu(k)\,q}{\phi(qk)}}
&\quad\hbox{if $(h,q)=1$},\\
0&\quad\hbox{otherwise},\\
\end{cases} 
\ee
the sum in \eqref{eq:superbound} runs over the complex zeros
$\rho=\frac12+i\gamma$ of $L(s,\chi)$ $($counted with multiplicity$)$,
and the implied constant in \eqref{eq:superbound} depends only on
$\chi$, $\xi$, $\euB$, and $\eps$.
}

\bigskip Our aim in this note is to prove the following theorem.

\begin{theorem}\label{thm:RHvsGRH} For each primitive
character $\chi$, the hypotheses ${\tt GRH}^\dagger[\chi]$ 
and ${\rm GRH}$ are equivalent.\end{theorem}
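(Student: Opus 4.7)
The proof has two implications, linked by an explicit-formula identity that converts the sum in \eqref{eq:superbound} into a sum of additively twisted von Mangoldt values, and then, via orthogonality of Dirichlet characters mod $k$, into multiplicatively twisted prime sums. To set up the identity, let $G_{\euB,X}$ be a smooth extension of $s\mapsto\euB((s-\tfrac12)/(2\pi i\xi X))$ to vertical strips, with rapid decay in $|\Im s|$ built from the Paley--Wiener/Fourier transform of $\euB$. Apply the residue theorem to
\[
-\frac{1}{2\pi i}\oint\xi^{-s}\cX_\ochi(1-s)\,G_{\euB,X}(s)\,\frac{L'}{L}(s,\chi)\,ds
\]
on a tall rectangle; the critical zeros of $L(s,\chi)$ contribute precisely the sum in \eqref{eq:superbound}. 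Shifting the right-hand edge past $\Re s=1$ and inserting the Dirichlet series for $-L'/L$ rewrites this as
\[
-\sum_n\chi(n)\Lambda(n)\,\mathcal{H}_{\euB,X}(n\xi)\;+\;(\text{residue at }s=1)\;+\;(\text{horizontal error}),
\]
where $\mathcal{H}_{\euB,X}(y)=\tfrac{1}{2\pi i}\int_{(c)}y^{-s}\cX_\ochi(1-s)G_{\euB,X}(s)\,ds$ is a Bessel/Voronoi-type oscillatory transform (via Stirling applied to $\cX_\ochi$) localized near $y\asymp X$.

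For the direction ${\rm GRH}\,\Rightarrow\,{\tt GRH}^\dagger[\chi]$, decompose the additive phase $\e(nh/k)$ hidden in the oscillatory kernel of $\mathcal{H}_{\euB,X}$ into Dirichlet characters mod $k$ by Gauss-sum inversion, reducing the additive prime sum to a weighted combination of multiplicatively twisted sums $\sum_n(\chi\psi)(n)\Lambda(n)w_{\euB,X}(n)$ with smooth weights $w_{\euB,X}$ supported near $n\asymp X$. Under GRH, each such multiplicative sum is $O(X^{1/2+\eps})$ for every nonprincipal $\chi\psi$, while the distinguished $\psi$ for which $\chi\psi$ is principal contributes a main term. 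A careful collection of Gauss sums (yielding $\ochi(h)$), the Möbius sieve restricting $\psi$ to primitive characters (yielding $\chi(k)\mu(k)$), and the density $q/\phi(qk)$ of the principal character mod $qk$ produces exactly $-C_\euB\widetilde C_{\chi,\xi}X$, establishing \eqref{eq:superbound}.

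The converse ${\tt GRH}^\dagger[\chi]\Rightarrow{\rm GRH}$ reverses this chain. Given any primitive $\chi^*$ mod $Q$, pick $k$ a common multiple of $q$ and $Q$, and $\psi$ mod $k$ such that $\chi\psi$ is induced by $\chi^*$. Summing \eqref{eq:superbound} against $\opsi(h)$ over $h\in(\Z/k\Z)^*$ isolates the $\chi^*$-component by orthogonality and delivers
\[
\sum_n\chi^*(n)\Lambda(n)\,w_{\euB,X}(n)\ll X^{1/2+\eps}
\]
for a sufficiently flexible family of smooth weights $w_{\euB,X}$. A standard inverse-Perron (or Mellin-inversion with contradiction) argument then rules out any zero of $L(s,\chi^*)$ with $\Re s>\tfrac12$, and since $\chi^*$ was arbitrary, GRH follows.

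\emph{Main obstacle.} The hardest technical step is the exact algebraic matching of the main-term constant $\widetilde C_{\chi,\xi}$: simultaneously tracking the Gauss-sum factor $\ochi(h)$, the Möbius--primitivity factor $\chi(k)\mu(k)$, and the density factor $q/\phi(qk)$ so as to reproduce the precise form in \eqref{eq:Cconsts}, and, correspondingly, verifying that $\mathcal{H}_{\euB,X}$ can be treated as a smooth scale-$X$ localizer rather than as a more intricate oscillatory kernel. Once this bookkeeping is settled, both directions reduce to the classical Perron equivalence between square-root cancellation in $\sum_n\chi^*(n)\Lambda(n)w(n)$ and GRH for $L(s,\chi^*)$.
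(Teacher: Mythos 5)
Your overall strategy coincides with the paper's: an unconditional Landau--Gonek type identity converting the weighted zero sum into the additively twisted sum $\frac{\tau(\ochi)}{q}\sum_n\Lambda(n)\chi(n)\e(-n\xi/q)\euB(n/qX)$ (Theorem~\ref{thm:vonMangoldt-twist} and its smoothed corollary), the forward implication via character orthogonality plus GRH (Lemmas~\ref{lem:ultraclean} and~\ref{lem:aloevera}), and the converse by synthesizing arbitrary multiplicative twists out of the rational additive twists via Gauss sums. But two steps, as you describe them, would not go through. First, the residue-theorem step with $G_{\euB,X}$ is not available: a function in $C_c^\infty(\R^+)$ is not analytic, so there is no analytic weight on a strip agreeing with $\euB\big((s-\tfrac12)/(2\pi i\xi X)\big)$ on the critical line, and with a merely smooth weight neither the residue computation at the zeros nor the contour shift is legitimate. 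The paper instead proves the sharp-cutoff identity first (Lemma~\ref{lem:ConGhoGon}, resting on Gonek's stationary-phase Lemma~\ref{lem:gonekLem8}) and only afterwards inserts $\euB$ by Riemann--Stieltjes integration; some such device (or passing to the Fourier transform of $\euB$) is required. The same stationary-phase analysis shows that your ``smooth scale-$X$ localizer'' heuristic is precisely where the arithmetic is generated: one gets $\mathcal{H}_{\euB,X}(n\xi)\approx\frac{\tau(\ochi)}{q}\,\e(-n\xi/q)\,\euB(n/qX)$, so the additive phase is $\e(-nh/(qk))$, i.e.\ to modulus $qk$, not $\e(nh/k)$.

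This modulus matters in both directions. In the forward direction the character decomposition must be taken modulo $qk$; the main term comes from the principal character mod $qk$ weighted by the twisted Ramanujan-type sum $\sum_{(a,qk)=1}\chi(a)\e(-ah/qk)=\ochi(-h)\chi(k)\mu(k)\tau(\chi)$ (\cite[Theorem~9.12]{MontVau}), which together with $\tau(\chi)\tau(\ochi)=\chi(-1)q$ yields exactly the factors $\ochi(h)\chi(k)\mu(k)$ and $q/\phi(qk)$ in $\widetilde C_{\chi,\xi}$; a decomposition into characters mod $k$ with a ``M\"obius sieve over primitive $\psi$'' cannot produce the $q/\phi(qk)$ density. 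In the converse, averaging over $h\in(\Z/k\Z)^*$ against $\opsi(h)$ is likewise misaligned with the phase mod $qk$: one should run $\xi=a/\tilde q$ over residues $a$ modulo $q\tilde q$ and average against $\otheta(-a)$ with $\vartheta=\chi^*\ochi$, as the paper does, and one must verify that the Gauss-sum coefficient relating this average to $\sum_n\Lambda(n)\chi^*(n)\euB(n/X)$ (the paper's $\tau(\otheta)$) is nonzero before dividing by it. Finally, the conclusion ``$\sum_n\chi^*(n)\Lambda(n)w_{\euB,X}(n)\ll X^{1/2+\eps}$ for arbitrary primitive $\chi^*$'' cannot be right as stated: for the trivial character it is false, and that case must instead deliver the asymptotic \eqref{eq:RHcrit}, which is what supplies RH and allows Lemma~\ref{lem:ultraclean} to close the argument. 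With these repairs your outline becomes essentially the paper's proof.
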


Theorem~\ref{thm:RHvsGRH} generalizes the main result of \cite{Banks2},
in which the author showed that ${\tt GRH}^\dagger[\one]$
and ${\rm GRH}$ are equivalent.

\begin{corollary}\label{cor:RHvsGRH}
If ${\tt GRH}^\dagger[\chi]$ holds for one primitive
character $\chi$, then it holds for all primitive
characters.
\end{corollary}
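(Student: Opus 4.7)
\medskip\noindent\textbf{Proof Proposal.}

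\medskip\noindent\emph{Overall strategy.} Following and extending \cite{Banks2} (which handles $\chi=\one$), the plan is to derive an unconditional explicit-formula-type identity that converts the sum over zeros in \eqref{eq:superbound} into a $\chi$--$\Lambda$ sum, and then to invoke the classical equivalence between GRH and square-root cancellation for Dirichlet $L$-functions of all primitive characters whose conductor divides $qk$.

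\medskip\noindent\emph{Step 1 (Explicit-formula identity).} The central step is to prove an unconditional identity of the schematic form
\[
\sum_{\rho=\frac12+i\gamma}\xi^{-\rho}\cX_\ochi(1-\rho)\euB\!\Big(\tfrac{\gamma}{2\pi\xi X}\Big)
+C_\euB\widetilde C_{\chi,\xi}X
=-\sum_{n\ge 1}\chi(n)\Lambda(n)K_{\euB,\xi}(n/X)+O(1),
\]
where $K_{\euB,\xi}$ is an explicit rapidly-decaying transform of $\euB$ built out of the gamma factor $\cX_\ochi$. I would derive this by evaluating the contour integral
\[
\frac{1}{2\pi i}\int_{(2)}\cX_\ochi(1-s)\,\xi^{-s}\,H_\euB(s,X)\,\Big(-\tfrac{L'}{L}(s,\chi)\Big)\,ds
\]
in two different ways, where $H_\euB(s,X)$ is an analytic interpolant with $H_\euB(\tfrac12+i\gamma,X)=\euB(\gamma/(2\pi\xi X))$ obtained from a Mellin--Barnes representation of $\euB$. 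Expanding $-L'/L(s,\chi)$ as a Dirichlet series on $\mathrm{Re}(s)=2$ yields the right-hand side of the identity, while shifting the contour past the critical strip picks up residues at the non-trivial zeros of $L(s,\chi)$ together with a residue at $s=1$ coming from $\xi^{-s}$ and the functional-equation factor. Unwinding this residue through the Gauss sum $\tau(\ochi)$ and the Möbius sieve that enforces primitivity modulo $qk$ should produce precisely the coefficient $\widetilde C_{\chi,\xi}$ of \eqref{eq:Cconsts}.

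\medskip\noindent\emph{Step 2 (Both implications via character decomposition).} For $(n,qk)=1$, orthogonality and Gauss sums provide a decomposition $\chi(n)\e(nh/k)=\sum_{\psi\bmod qk}c_\psi\,\psi(n)$, whereby the character--$\Lambda$ sum in Step 1 splits into a finite linear combination of sums $\sum_n\psi(n)\Lambda(n)\widetilde K_\euB(n/X)$ ranging over primitive characters $\psi$ whose conductor divides $qk$. Classically (via Mellin inversion and an Ingham--Landau style explicit formula), each such sum is $O_\psi(X^{1/2+\eps})$ if and only if GRH holds for $L(s,\psi)$. Combining with the identity of Step 1: assuming GRH, every inner sum is small, hence so is the left-hand side of the identity, yielding ${\tt GRH}^\dagger[\chi]$. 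Conversely, assuming ${\tt GRH}^\dagger[\chi]$, varying $\xi=h/k$ over all reduced rationals and applying character orthogonality isolates each primitive $\psi$, producing the corresponding bound and hence GRH for $L(s,\psi)$; since $k$ is arbitrary, this covers all primitive characters.

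\medskip\noindent\emph{Main obstacle.} The difficulty concentrates in Step 1. First, since $\euB\in C_c^\infty(\R^+)$ is not analytic, constructing $H_\euB$ and justifying the contour shift require a careful Mellin--Barnes representation of $\euB$ together with an interchange-of-integration argument. Second, the residue at $s=1$ must be computed exactly: one needs the factor $\mu(k)$ to emerge from the Möbius sieve enforcing primitivity, the factor $\chi(k)$ from the contribution of indices $n\equiv 0\pmod k$, and the coefficient to vanish when $(h,q)>1$ because the twist $\chi(n)\e(nh/k)$ then has no principal component modulo $qk$. Ensuring that all bookkeeping is uniform in $\xi$, $\euB$, and $\eps$, and that the error term $K_{\euB,\xi}$ decays fast enough to tolerate the character decomposition of Step 2, is what makes the calculation delicate.
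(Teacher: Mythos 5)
You have overlooked that this corollary is an immediate consequence of Theorem~\ref{thm:RHvsGRH}: if ${\tt GRH}^\dagger[\chi]$ holds for one primitive $\chi$, the theorem applied to $\chi$ yields GRH, and the theorem applied to any other primitive character $\psi$ then yields ${\tt GRH}^\dagger[\psi]$. That two-line deduction is the intended proof; instead you set out to re-prove the full equivalence from scratch, which is unnecessary and, as sketched, contains genuine gaps.

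Two concrete problems with your Step 1. First, the residue at $s=1$ you invoke does not exist: for a primitive $\chi$ of modulus $q>1$ the function $L(s,\chi)$ is entire and nonvanishing at $s=1$, $\xi^{-s}$ is entire, and $\cX_\ochi(1-s)$ involves $\Gamma(s)$, whose poles lie at the non-positive integers; hence no main term can be extracted from the contour shift. Indeed, an unconditional identity of your schematic shape, with $C_\euB\widetilde C_{\chi,\xi}X$ on the left and only a twisted $\Lambda$-sum plus $O(1)$ on the right, would contradict the unconditional smoothed estimate proved in \S\ref{sec:vonMangoldt-twist}, which shows that the zero sum plus $\frac{\tau(\ochi)}{q}\sum_n\Lambda(n)\chi(n)\e(-n\xi/q)\euB(n/qX)$ is $\ll X^{1/2}\log_+^2X$ with no main term present: in the paper the term $C_\euB\widetilde C_{\chi,\xi}X$ enters only conditionally, as the principal-character contribution to the twisted $\Lambda$-sum via Lemma~\ref{lem:aloevera} (which rests on GRH through Lemma~\ref{lem:ultraclean}), not as a residue of the contour integral. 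Second, your analytic interpolant $H_\euB$ with $H_\euB(\tfrac12+i\gamma,X)=\euB(\gamma/(2\pi\xi X))$ is both problematic (the weight depends on the ordinate $\gamma$ alone and $\euB$ is not analytic) and unnecessary: the paper proves the sharp-cutoff unconditional Theorem~\ref{thm:vonMangoldt-twist} and then inserts the smooth weight by Riemann--Stieltjes partial summation in the ordinate. Your Step 2 (decomposition into characters modulo $qk$ and isolating each primitive $\psi$ by varying $\xi$) does match the mechanism the paper uses in the converse direction of Theorem~\ref{thm:RHvsGRH}, but without a correct unconditional bridge in Step 1 the argument as proposed does not close.
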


We emphasize that ${\tt GRH}^\dagger[\chi]$ is formulated
entirely in terms of the zeros of a single $L$-function
$L(s,\chi)$.
If $\chi$ and $\psi$ are different primitive characters,
it is reasonable to predict that the zeros of $L(s,\chi)$ and $L(s,\psi)$
are unrelated, and that there is no reason
\emph{a priori} that the hypotheses ${\tt GRH}^\dagger[\chi]$ and
${\tt GRH}^\dagger[\psi]$ should be connected. Nevertheless,
${\tt GRH}^\dagger[\chi]$ and ${\tt GRH}^\dagger[\psi]$
are actually equivalent in view of Corollary~\ref{cor:RHvsGRH}.

\section{Preliminaries}

We continue to use the notation introduced in \S\ref{sec:intro}.
Below, $\chi$ always denotes 
an arbitrary (but fixed) primitive Dirichlet
character of modulus $q\ge 1$.

Throughout the paper,  implied constants in the
symbols $\ll$, $O$, etc., may depend on various parameters
as indicated by the notation (see, e.g., \eqref{eq:superbound}), but
such constants are independent of all other parameters.

\subsection{The function $\cX_\chi(s)$}

\begin{lemma}\label{lem:Xxexpansion}
Let $\cI$ be a bounded interval in $\R$. Uniformly for $c\in\cI$ and $t\ge 1$,
we have
\be\label{eq:spray}
\cX_\chi(1-c-it)=\tau(\chi) q^{c-1}\er^{-\pi i/4}
\exp\Big(it\log\Big(\frac{qt}{2\pi\er}\Big)\Big)
\Big(\frac{t}{2\pi}\Big)^{c-1/2}\big\{1+O_\cI(t^{-1})\big\}.
\ee
\end{lemma}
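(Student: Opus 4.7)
The plan is to substitute $s=1-c-it$ directly into the definition \eqref{eq:barchetta} of $\cX_\chi$ and then asymptotically simplify each factor as $t\to\infty$. After substitution one obtains
\[
\cX_\chi(1-c-it)
=\epsilon_\chi\,2^{1-c-it}\pi^{-c-it}q^{c-1/2+it}\,
\Gamma(c+it)\,
\sin\tfrac{\pi}{2}(1-c-it+\kappa_\chi),
\]
and it is the product $\Gamma(c+it)\sin\frac{\pi}{2}(1-c-it+\kappa_\chi)$ that carries all of the $t$-dependence we must unpack.

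First I would invoke the standard Stirling expansion for $\Gamma(c+it)$, in the form
\[
\Gamma(c+it)=\sqrt{2\pi}\,t^{c-1/2}\,\er^{-\pi t/2}\,
\exp\!\bigl(i(t\log t-t)+i\pi(c-\tfrac12)/2\bigr)\bigl\{1+O_\cI(t^{-1})\bigr\},
\]
valid uniformly for $c$ in the bounded interval $\cI$ and $t\ge 1$. In parallel I would write the sine as a difference of exponentials; for $t\ge 1$ the term $\er^{-i\pi(1-c-it+\kappa_\chi)/2}$ is of size $\er^{-\pi t/2}$ while the term $\er^{i\pi(1-c-it+\kappa_\chi)/2}$ is of size $\er^{\pi t/2}$, so
\[
\sin\tfrac{\pi}{2}(1-c-it+\kappa_\chi)
=\frac{1}{2i}\er^{i\pi(1-c+\kappa_\chi)/2}\er^{\pi t/2}
\bigl\{1+O(\er^{-\pi t})\bigr\}.
\]
Multiplying these two expansions causes the $\er^{-\pi t/2}$ from Stirling and the $\er^{\pi t/2}$ from the sine to cancel, leaving a $t$-polynomial prefactor and an oscillatory factor $\exp(i(t\log t-t))$.

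The remaining work is purely bookkeeping of constants. Collecting the elementary powers yields $2\epsilon_\chi(2\pi)^{-c}q^{c-1/2}\er^{it\log(q/2\pi)}$, and combining this with $\sqrt{2\pi}(2\pi)^{-c}t^{c-1/2}=(t/2\pi)^{c-1/2}\cdot(2\pi)^{1/2-c}\cdot$ (cancelled) reproduces the factor $(t/2\pi)^{c-1/2}$ appearing in \eqref{eq:spray}. The purely numerical phase telescopes to
\[
\frac{1}{2i}\,\er^{i\pi(c-1/2)/2}\er^{i\pi(1-c+\kappa_\chi)/2}
=\tfrac12\,\er^{-i\pi/4}\,i^{\kappa_\chi},
\]
and invoking the identity $\epsilon_\chi\,i^{\kappa_\chi}=\tau(\chi)/\sqrt{q}$ from \eqref{eq:red} converts the prefactor $\epsilon_\chi i^{\kappa_\chi}q^{c-1/2}$ into $\tau(\chi)q^{c-1}$. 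Finally the exponent merges as $it\log(q/2\pi)+i(t\log t-t)=it\log(qt/2\pi\er)$, producing the shape stated in \eqref{eq:spray}.

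There is no real obstacle here, only careful tracking of phases; the one place where a sign is easily lost is in combining the two $i^{\kappa_\chi}$-type phases with the $\frac{1}{2i}$ from the sine so as to recognize $\er^{-\pi i/4}$, and in using \eqref{eq:red} to reabsorb $\epsilon_\chi i^{\kappa_\chi}$. Uniformity in $c\in\cI$ is inherited verbatim from the uniformity of Stirling's formula on bounded vertical strips, and the error term of order $O_\cI(t^{-1})$ swallows the exponentially small contribution from the subdominant sine exponential.
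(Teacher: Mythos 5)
Your proposal is correct and follows essentially the same route as the paper: substitute $s=1-c-it$ into \eqref{eq:barchetta}, apply Stirling's formula for $\Gamma(c+it)$ (the paper expands $\Gamma(s)=\sqrt{2\pi}\,s^{s-1/2}\er^{-s}\{1+O(t^{-1})\}$ to reach exactly the form you quote), replace the sine by its dominant exponential, and track the phases using $\epsilon_\chi i^{\kappa_\chi}=\tau(\chi)/\sqrt{q}$. The bookkeeping of the constants and the merged exponent $it\log(qt/2\pi\er)$ all check out, so nothing further is needed.
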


\begin{proof}
Replacing $s$ by $1-s$ in \eqref{eq:barchetta} gives
\[
\cX_\chi(1-s)=\epsilon_\chi 2^{1-s}\pi^{-s}q^{s-1/2}
\Gamma(s)\sin\tfrac{\pi}{2}(1-s+\kappa_\chi).
\]
Let $s=c+it$, $t\ge 1$. Using Stirling's formula for the gamma function
\[
\Gamma(s)=\sqrt{2\pi}\,s^{s-1/2}\er^{-s}\{1+O(t^{-1})\}
\]
(see, e.g., Montgomery and Vaughan~\cite[Theorem~C.1]{MontVau})
along with the estimates
\[
(s-\tfrac12)\log s=(c-\tfrac12)\log t+c
+(t\log t-\tfrac{\pi}{4})i+\tfrac{\pi is}{2}+O(t^{-1})
\]
and
\[
\sin\tfrac{\pi}{2}(1-s+\kappa_\chi)
=\tfrac12i^{\kappa_\chi}\er^{-\pi i s/2}\{1+O(\er^{-\pi t})\},
\]
and recalling \eqref{eq:red}, a straightforward computation
leads to \eqref{eq:spray}.
\end{proof}

The following lemma is due to Gonek~\cite[Lemma~2]{Gonek};
the proof is based on the stationary phase method.

\begin{lemma}\label{lem:gonekLem8}
Uniformly for $c\in[\frac{1}{10},2]$
and $a<b\le 2a$, we have
\[
\int_a^b\exp\Big(it\log\Big(\frac{t}{u\er}\Big)\Big)
\Big(\frac{t}{2\pi}\Big)^{c-1/2}dt
=(2\pi)^{1-c}u^c\er^{-iu+\pi i/4}\cdot\ind{a,b}(u)+\widetilde E(a,b,u),
\]
where
\[
\ind{a,b}(u)\defeq\begin{cases}
1&\quad\hbox{if $u\in(a,b]$},\\
0&\quad\hbox{otherwise},\\
\end{cases}
\]
and
\[
\widetilde E(a,b,u)\ll a^{c-1/2}+\frac{a^{c+1/2}}{|a-u|+a^{1/2}}
+\frac{b^{c+1/2}}{|b-u|+b^{1/2}}.
\]
\end{lemma}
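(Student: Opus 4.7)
My plan is to establish this by the classical stationary phase method, which is precisely the approach indicated in the statement. Writing $\phi(t)\defeq t\log(t/(u\er))=t\log t-t-t\log u$ and $g(t)\defeq(t/(2\pi))^{c-1/2}$, one rewrites the integral as $\int_a^b g(t)\,\er^{i\phi(t)}\,dt$. A direct computation gives $\phi'(t)=\log(t/u)$ and $\phi''(t)=1/t$, so the phase admits a unique non-degenerate saddle point at $t=u$, lying inside $(a,b]$ precisely when $\ind{a,b}(u)=1$. Evaluating $\phi(u)=-u$ and applying the formal saddle-point formula $g(u)\sqrt{2\pi/\phi''(u)}\,\er^{i\phi(u)+i\pi/4}$ produces the claimed main term $(2\pi)^{1-c}u^c\er^{-iu+i\pi/4}$ when the saddle falls inside the interval, and zero otherwise.

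To make this rigorous and uniform, I would introduce a smooth cutoff $\psi$ concentrated on a window of width $\asymp\sqrt u$ around $t=u$. On the central piece $\int g\psi\,\er^{i\phi}\,dt$, I expand $\phi(t)=-u+(t-u)^2/(2u)+O((t-u)^3/u^2)$ and reduce the result to a complex Gaussian integral. The Gaussian yields the main term, while the cubic and higher Taylor remainders, together with the smoothing of the cutoff, contribute an error of size $\asymp a^{c-1/2}$, which explains the first summand in the bound on $\widetilde E(a,b,u)$. On the complementary tails, where $|\phi'(t)|=|\log(t/u)|$ is bounded away from zero, repeated integration by parts is available: the boundary contributions at $t=a$ and $t=b$ take the form $g(a)/|\phi'(a)|$ and $g(b)/|\phi'(b)|$, which (using $\log(t/u)\asymp (t-u)/u$ and $t\asymp a$ throughout $[a,b]$) translate to $\asymp a^{c+1/2}/|a-u|$ and $\asymp b^{c+1/2}/|b-u|$, matching the remaining two error summands.

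The main obstacle will be making the estimates uniform when the saddle $u$ approaches an endpoint of $[a,b]$. In that regime the separation of scales breaks down: the integration by parts bound diverges as $|a-u|\to 0$, but the true size of the integral stays controlled by a direct estimate---the amplitude $g$ has size $\asymp a^{c-1/2}$ on a window of width $\asymp\sqrt a$, yielding a contribution $\asymp a^c$. The $\sqrt a$ correction in the denominator $|a-u|+a^{1/2}$ is exactly the interpolation between the integration-by-parts regime far from the endpoint and the direct estimate near it; a careful choice of the cutoff scale ensures both regimes are captured simultaneously. The case $u\notin(a,b]$ (no main term present) is subsumed in the same integration-by-parts analysis, since there is no stationary point to extract, and uniformity in $c\in[\tfrac1{10},2]$ is automatic since this parameter affects only the amplitude exponent, which varies continuously over a compact range.
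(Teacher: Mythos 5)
The paper offers no proof of this lemma at all---it simply quotes Gonek's Lemma~2, whose proof is the stationary phase argument you outline---so your proposal follows essentially the same route as the cited source. Your saddle-point computation of the main term $(2\pi)^{1-c}u^{c}\er^{-iu+\pi i/4}$ and the three-part error analysis (central window of width $\asymp\sqrt{u}$, integration by parts in the tails, and the $|a-u|+a^{1/2}$ interpolation when the saddle approaches an endpoint) are the standard ingredients of that argument, and the outline is sound.
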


The next lemma is a variant of 
Conrey, Ghosh, and Gonek \cite[Lemma~1]{ConGhoGon}.

\begin{lemma}\label{lem:ConGhoGon}
Uniformly for $v>0$ and $c\in[\frac{1}{10},2]$, we have
\[
\frac{1}{2\pi i}\int_{c+i}^{c+iT}v^{-s}\cX_\chi(1-s)\,ds
=\begin{cases}
\displaystyle{\frac{\tau(\chi)}{q}\,\e(-v/q)+E(q,T,v)}
&\quad\hbox{if $\frac{q}{2\pi}<v\le \frac{qT}{2\pi}$},\\
E(q,T,v)&\quad\hbox{otherwise},\\
\end{cases}
\]
where
\[
E(q,T,v)\ll\frac{q^{c-1/2}}{v^c}
\bigg(T^{c-1/2}+\frac{T^{c+1/2}}{|T-2\pi v/q|+T^{1/2}}\bigg).
\]
\end{lemma}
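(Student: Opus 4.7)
The plan is to parametrize the contour by writing $s=c+it$ with $t\in[1,T]$, so that
\[
\frac{1}{2\pi i}\int_{c+i}^{c+iT}v^{-s}\cX_\chi(1-s)\,ds
=\frac{1}{2\pi}\int_1^T v^{-c-it}\cX_\chi(1-c-it)\,dt,
\]
and then insert the asymptotic expansion of $\cX_\chi(1-c-it)$ furnished by Lemma~\ref{lem:Xxexpansion}. Setting $u\defeq 2\pi v/q$, the combined phase of the integrand simplifies,
\[
\er^{-it\log v}\exp\!\big(it\log(qt/(2\pi\er))\big)=\exp\!\big(it\log(t/(u\er))\big),
\]
so that after factoring out the constants the integrand takes exactly the form treated by Lemma~\ref{lem:gonekLem8}, with explicit prefactor $v^{-c}\tau(\chi)q^{c-1}\er^{-\pi i/4}/(2\pi)$.

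Since Lemma~\ref{lem:gonekLem8} requires $a<b\le 2a$, my next step would be to partition $[1,T]$ dyadically as $[1,2]\cup[2,4]\cup\cdots\cup[2^k,T]$ with $2^k<T\le 2^{k+1}$, and apply the lemma to each subinterval. The stationary-phase main term $(2\pi)^{1-c}u^c\er^{-iu+\pi i/4}\ind{a,b}(u)$ vanishes on every dyadic subinterval except the unique one containing $u$, and such a subinterval exists if and only if $1<u\le T$, which is precisely the condition $q/(2\pi)<v\le qT/(2\pi)$ appearing in the lemma. A direct computation, using $u^c=(2\pi v/q)^c$, then shows that the surviving main term, multiplied by the prefactor, collapses to $\tau(\chi)q^{-1}\er^{-iu}=\tau(\chi)q^{-1}\e(-v/q)$, which is the leading contribution claimed.

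The main obstacle will be the careful bookkeeping of the error terms. There are two sources: the $O(t^{-1})$ remainder from Stirling in Lemma~\ref{lem:Xxexpansion}, contributing (after multiplication by the prefactor and integration) an amount $\ll v^{-c}q^{c-1/2}T^{c-1/2}$ plus a harmless absolute constant; and the dyadic sum of the errors $\widetilde E(a_j,b_j,u)$ from Lemma~\ref{lem:gonekLem8}. The $a_j^{c-1/2}$ pieces sum geometrically to $\ll T^{c-1/2}$. The boundary terms $a_j^{c+1/2}/(|a_j-u|+a_j^{1/2})$ split into three cases: the dyadic piece containing $u$ (which contributes $\ll u^c$, absorbed after multiplying by the prefactor $v^{-c}q^{c-1/2}$), the interior nodes away from $u$ (where $|a_j-u|\asymp\max(a_j,u)$, reducing once again to geometric sums bounded by $T^{c-1/2}$), and the outermost node at $b=T$, which yields precisely the surviving term $T^{c+1/2}/(|T-2\pi v/q|+T^{1/2})$ in $E(q,T,v)$. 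Multiplying the total error by the prefactor $v^{-c}q^{c-1/2}/(2\pi)$ then gives the stated bound, completing the argument.
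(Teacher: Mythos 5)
Your setup and main-term computation follow the paper's own route exactly: parametrize the segment, insert Lemma~\ref{lem:Xxexpansion}, and apply Lemma~\ref{lem:gonekLem8} with $u\defeq 2\pi v/q$; the collapse of the prefactor times $(2\pi)^{1-c}u^c\er^{-iu+\pi i/4}$ into $\tau(\chi)q^{-1}\e(-v/q)$ is correct, as is the translation of $1<u\le T$ into $\frac{q}{2\pi}<v\le\frac{qT}{2\pi}$. The genuine problem is in your dyadic error bookkeeping. Nothing prevents $u$ from lying within $O(u^{1/2})$ of a dyadic cut point $t=2^j$, and in that case Lemma~\ref{lem:gonekLem8} only gives the boundary contribution $t^{c+1/2}/(|t-u|+t^{1/2})\asymp u^c$ for the two blocks meeting at $t$ --- the same size as the main term itself. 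Multiplying by the prefactor $\asymp q^{c-1/2}v^{-c}$ turns this into $\asymp q^{-1/2}$, which is \emph{not} absorbed into $E(q,T,v)$: take $q=1$, $c=1$ and $2\pi v\asymp T/2$, where the claimed bound is $E\ll T^{-1/2}$ while your error is $\asymp 1$. So the step ``the dyadic piece containing $u$ contributes $\ll u^c$, absorbed after multiplying by the prefactor'' fails, and with it the stated error term.

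Two repairs are available. The paper itself does not decompose at all: after reducing to $\int_1^T\exp\big(it\log\frac{qt}{2\pi v\er}\big)(t/2\pi)^{c-1/2}dt$ it applies the stationary-phase estimate to the whole range, i.e.\ Lemma~\ref{lem:gonekLem8} with $a=1$, $b=T$ (this is Gonek's lemma in the full-interval form underlying \cite[Lemma~1]{ConGhoGon}, whose error involves only the two endpoints of the range of integration), so the only surviving boundary term is the admissible $T^{c+1/2}/(|T-2\pi v/q|+T^{1/2})$. Alternatively, if you insist on blocks with $b\le 2a$, choose the two cut points bracketing $u$ adaptively at distance $\asymp u$ from $u$ (e.g.\ near $3u/4$ and $3u/2$, which respects the ratio condition); their boundary terms are then $\ll u^{c-1/2}\ll T^{c-1/2}$, and the rest of your geometric summation goes through. (Note also that your bounds $\sum_j a_j^{c-1/2}\ll T^{c-1/2}$ and the treatment of the Stirling error tacitly assume $c>\tfrac12$; the paper's own write-up shares this imprecision, which is harmless for the application since the lemma is only used with $c=1+1/\log qT$.)
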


\begin{proof}
Using Lemma~\ref{lem:Xxexpansion}, we have
\dalign{
&\frac{1}{2\pi i}\int_{c+i}^{c+iT}v^{-s}\cX_\chi(1-s)\,ds
=\frac{1}{2\pi}\int_1^Tv^{-c-it}\cX_\chi(1-c-it)\,dt\\
&\qquad\quad=\frac{\tau(\chi) q^{c-1}\er^{-\pi i/4}}{2\pi v^c}\bigg(\int_1^T 
\exp\Big(it\log\Big(\frac{qt}{2\pi v\er}\Big)\Big)
\Big(\frac{t}{2\pi}\Big)^{c-1/2}\,dt+O(T^{c-1/2})\bigg),
}
and the result follows by applying Lemma~\ref{lem:gonekLem8} with $u\defeq 2\pi v/q$.
\end{proof}

\subsection{Essential bound}

\begin{lemma}\label{lem:trade}
For any $t\ge 2$, there is a real number $t_*\in[t,t+1]$ such that
\[
\frac{L'}{L}(\sigma\pm it_*,\chi)\ll(\log qt)^2\qquad(-1\le\sigma\le 2).
\]
\end{lemma}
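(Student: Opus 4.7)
The plan is to combine the classical partial-fraction expansion of $L'/L(s,\chi)$ with a pigeonhole argument that selects $t_*$ at distance $\gg 1/\log qt$ from every nearby zero ordinate.

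First, logarithmic differentiation of the Hadamard product for the completed $L$-function attached to $\chi$, together with Stirling's formula for the resulting gamma factor, produces
\[
\frac{L'}{L}(s,\chi)=-\tfrac12\log\tfrac{q}{\pi}-\tfrac12\tfrac{\Gamma'}{\Gamma}\Big(\tfrac{s+\kappa_\chi}{2}\Big)+B_\chi+\sum_\rho\Big(\tfrac{1}{s-\rho}+\tfrac{1}{\rho}\Big).
\]
Subtracting the same identity at $s=2+it$ (where $L'/L(2+it,\chi)=O(1)$) eliminates the constants $1/\rho$, and pairing $1/(s-\rho)$ with $1/(2+it-\rho)$ for $|\gamma-t|>1$, together with the density bound $N(T+1,\chi)-N(T,\chi)\ll\log q(|T|+2)$, collapses the tail and yields the familiar truncated representation
\[
\frac{L'}{L}(s,\chi)=\ssum{|\gamma-t|\le 1}\frac{1}{s-\rho}+O(\log qt),
\]
uniformly in $s=\sigma+it$ with $\sigma\in[-1,2]$ and $t\ge 2$, where the sum runs over non-trivial zeros $\rho=\beta+i\gamma$ of $L(s,\chi)$.

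Next, the same Riemann--von Mangoldt bound shows that there are at most $M\defeq C\log qt$ zeros with $t-1\le\gamma\le t+2$, for some absolute $C>0$. I would partition $[t,t+1]$ into $2M$ equal subintervals of length $1/(2M)$; since fewer than $M$ of them can contain such an ordinate, at least one is free of zero ordinates, and I would take $t_*$ to be its midpoint. This forces $|t_*-\gamma|\ge 1/(4M)\gg 1/\log qt$ for every zero with $|\gamma-t_*|\le 1$. For $s=\sigma\pm it_*$ with $-1\le\sigma\le 2$, the trivial inequality $|s-\rho|\ge|t_*-\gamma|$ then makes each of the $\ll\log qt$ surviving terms $\ll\log qt$, producing an aggregate $\ll(\log qt)^2$ that dominates the $O(\log qt)$ error and proves the lemma.

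The only non-routine ingredient is the truncated partial-fraction expansion, but it is entirely classical (cf.\ Davenport, \emph{Multiplicative Number Theory}, Ch.~16); accordingly I do not expect any genuinely new obstacle. The essential content is simply the quantitative interplay between the $\ll\log qt$ nearby zeros and the pigeonhole separation of $t_*$ from their ordinates.
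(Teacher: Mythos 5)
Your proof is the classical argument that the paper simply outsources to Montgomery--Vaughan: their Lemmas 12.2 and 12.7 are exactly your truncated partial-fraction expansion (for $L(s,\chi)$ and for $\zeta$, the latter covering $q=1$, where an extra $O(1)$ term from the pole at $s=1$ appears for $t\ge 2$), and the zero-count-plus-pigeonhole selection of $t_*$ is the standard way to finish, so in substance you follow the paper's route. One small point to repair: the lemma asserts the bound at $\sigma\pm it_*$, but your pigeonhole only avoids ordinates $\gamma\in[t-1,t+2]$; since for complex $\chi$ the zeros of $L(s,\chi)$ are not symmetric about the real axis, the sum at $\sigma-it_*$ runs over zeros with $\gamma\in[-t_*-1,-t_*+1]$, so you must also avoid the (at most $\ll\log qt$) ordinates $-\gamma$ with $\gamma\in[-t-2,-t+1]$ (equivalently, ordinates of zeros of $L(s,\overline\chi)$ near $t$), which only doubles $M$ in your pigeonhole and changes nothing else.
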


\begin{proof}
See \cite[Lemmas 12.2 and 12.7]{MontVau}.
\end{proof}

\subsection{Conditional results}

\begin{lemma}\label{lem:ultraclean}
The following statements are equivalent:
\begin{itemize}
\item[$(i)$] \text{\rm GRH} is true;
\item[$(ii)$] \text{\rm RH} is true, and
for any primitive character $\psi\ne\one$,
any $\euB\in C_c^\infty(\R^+)$, and any $\eps>0$, we have
\be\label{eq:byebirdy}
\sum_n\Lambda(n)\psi(n)\euB(n/X)
\,\mathop{\ll}\limits_{\psi,\euB,\eps}\, X^{1/2+\eps};
\ee
\item[$(iii)$] \text{\rm RH} is true, and
for any nonprincipal character $\psi$,
any $\euB\in C_c^\infty(\R^+)$, and any $\eps>0$, 
the bound \eqref{eq:byebirdy} holds.
\end{itemize}
\end{lemma}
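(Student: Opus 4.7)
The plan is to establish the cycle (iii)$\Rightarrow$(ii)$\Rightarrow$(i)$\Rightarrow$(iii). The implication (iii)$\Rightarrow$(ii) is immediate, since every primitive $\psi\ne\one$ is nonprincipal.

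For (i)$\Rightarrow$(iii), I assume GRH and fix a nonprincipal $\psi$ modulo $m$, induced by the primitive character $\psi^*$ of conductor $q^*\mid m$ (necessarily $\psi^*\ne\one$). The discrepancy
\[
\sum_n\Lambda(n)[\psi(n)-\psi^*(n)]\euB(n/X)
\]
is supported on prime powers $p^k$ with $p\mid m$ and $p\nmid q^*$; only $O_\psi(\log X)$ such $p^k$ lie in the support of $\euB(\,\cdot/X)$, so this discrepancy is negligible. It therefore suffices to bound the sum for $\psi^*$. By Mellin inversion,
\[
\sum_n\Lambda(n)\psi^*(n)\euB(n/X)=\frac{1}{2\pi i}\int_{(c)}\Big({-}\frac{L'}{L}(s,\psi^*)\Big)\widetilde{\euB}(s)X^s\,ds,\qquad c>1,
\]
where $\widetilde{\euB}(s)\defeq\int_0^\infty\euB(u)u^{s-1}\,du$ is entire and decays faster than any polynomial along vertical lines. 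Shifting the contour to $\mathrm{Re}(s)=\tfrac12+\eps$ crosses no residues, since $L(s,\psi^*)$ is entire for $\psi^*\ne\one$ and, under GRH, has no zeros in $\mathrm{Re}(s)>\tfrac12$. The standard conditional bound $L'/L(\tfrac12+\eps+it,\psi^*)\ll_\eps\log^2(q(|t|+2))$, combined with the rapid decay of $\widetilde{\euB}$, then shows the shifted integral is $O_{\psi,\euB,\eps}(X^{1/2+\eps})$.

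For (ii)$\Rightarrow$(i), RH is part of (ii), providing GRH for $\one$. Fix a primitive $\psi\ne\one$ and set $F_\euB(X)\defeq\sum_n\Lambda(n)\psi(n)\euB(n/X)$. Since $\euB\in C_c^\infty(\R^+)$, $F_\euB(X)=0$ for all sufficiently small $X>0$; together with the hypothesis \eqref{eq:byebirdy}, the Mellin transform
\[
\widetilde F_\euB(s)\defeq\int_0^\infty F_\euB(X)X^{-s-1}\,dX
\]
converges absolutely and is analytic in the half-plane $\mathrm{Re}(s)>\tfrac12$. A Fubini swap valid for $\mathrm{Re}(s)>1$ gives $\widetilde F_\euB(s)=-\widetilde{\euB}(s)\cdot L'/L(s,\psi)$, and this identity extends to $\mathrm{Re}(s)>\tfrac12$ by analytic continuation as meromorphic functions. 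Because $\widetilde F_\euB$ is analytic throughout $\mathrm{Re}(s)>\tfrac12$ and $\widetilde{\euB}$ is entire, any zero $\rho_0$ of $L(s,\psi)$ with $\mathrm{Re}(\rho_0)>\tfrac12$ is forced to satisfy $\widetilde{\euB}(\rho_0)=0$. But choosing $\euB(u)\defeq\phi(u)u^{1-\rho_0}$ with any nonnegative nonzero real-valued $\phi\in C_c^\infty(\R^+)$ gives $\widetilde{\euB}(\rho_0)=\int_0^\infty\phi(u)\,du>0$, a contradiction. Hence $L(s,\psi)$ is zero-free in $\mathrm{Re}(s)>\tfrac12$, completing GRH.

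The substantive step is (ii)$\Rightarrow$(i): the arbitrariness of the test function $\euB$ is essential, since no single $\euB$ can pin down an off-line zero, but the Mellin-adapted family $\euB(u)=\phi(u)u^{1-\rho_0}$ can probe any prescribed point in the right half-plane. Everything else — the Fubini swap, the analytic continuation, and the conditional bound on $L'/L$ used in (i)$\Rightarrow$(iii) — is routine.
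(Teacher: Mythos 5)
Your proposal is correct, but it takes a genuinely different (and far more self-contained) route than the paper. The paper disposes of the lemma in a few lines: the equivalence $(i)\Longleftrightarrow(ii)$ is simply quoted from \cite[Lemma 2.2]{Banks2}, $(iii)\Rightarrow(ii)$ is declared obvious, and $(ii)\Rightarrow(iii)$ follows from the elementary bound \eqref{eq:lambdarules}, which controls the discrepancy between a nonprincipal character and the primitive character inducing it. You instead prove the cycle $(iii)\Rightarrow(ii)\Rightarrow(i)\Rightarrow(iii)$ from first principles: your $(i)\Rightarrow(iii)$ is the standard conditional contour shift (Mellin inversion, move to $\mathrm{Re}\,s=\tfrac12+\eps$ using GRH together with $L'/L\ll\log^2$ there and the rapid decay of $\widetilde{\euB}$), with the imprimitive-to-primitive reduction handled by the same ``prime powers dividing the modulus'' observation that the paper packages as \eqref{eq:lambdarules}; and your $(ii)\Rightarrow(i)$ replaces the citation by a clean Landau-type argument: the hypothesis \eqref{eq:byebirdy} makes $\widetilde F_\euB$ analytic in $\mathrm{Re}\,s>\tfrac12$, the identity $\widetilde F_\euB=-\widetilde{\euB}\cdot L'/L(\cdot,\psi)$ then forces $\widetilde{\euB}(\rho_0)=0$ at any putative zero with $\mathrm{Re}\,\rho_0>\tfrac12$, and the adapted test function $\euB(u)=\phi(u)u^{1-\rho_0}$ (admissible since the paper's $C_c^\infty(\R^+)$ consists of complex-valued functions) rules this out. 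Two points you leave implicit but which are routine: analyticity of $\widetilde F_\euB$ on $\mathrm{Re}\,s>\tfrac12$ uses that $F_\euB(X)$ vanishes for small $X$ and that the bound can be applied with $\eps$ chosen locally uniformly; and GRH for principal and imprimitive characters follows from the primitive case because the extra Euler factors vanish only on $\mathrm{Re}\,s=0$ (the paper itself reduces GRH to primitive characters, so this is consistent). What your route buys is independence from \cite{Banks2}; what the paper's route buys is brevity.
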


\begin{proof}
The equivalence $(i)\Longleftrightarrow(ii)$
is the content of \cite[Lemma 2.2]{Banks2}, and the
implication $(iii)\Longrightarrow(ii)$ is obvious.
Using the simple bound
\be\label{eq:lambdarules}
\ssum{n\le N\\(n,M)\ne 1}\Lambda(n)\ll\log M\cdot\log N
\qquad(M,N\ge 1),
\ee
the implication $(ii)\Longrightarrow(iii)$ is immediate.
\end{proof}

\begin{lemma}\label{lem:aloevera}
Under \text{\rm GRH}, for any $\xi\in\Q^+$,
 $\euB\in C_c^\infty(\R^+)$, and  $\eps>0$, we have
\be\label{eq:meowing}
\frac{\tau(\ochi)}{q}\sum_n\Lambda(n)\chi(n)\e(-n\xi/q)\euB(n/qX)
=C_\euB\widetilde C_{\chi,\xi} X
+O_{\chi,\xi,\euB,\eps}(X^{1/2+\eps}),
\ee
where $C_\euB$ and $\widetilde C_{\chi,\xi}$ are defined
by \eqref{eq:Cconsts}.
\end{lemma}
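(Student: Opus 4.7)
The plan is to expand $\e(-nh/(qk))$ (with $\xi=h/k$) in Dirichlet characters modulo $qk$ and apply Lemma~\ref{lem:ultraclean}(iii) to each resulting piece. Restricting to $(n,qk)=1$---the complement absorbed into the error via \eqref{eq:lambdarules}---orthogonality of the characters modulo $qk$ gives
\[
\chi(n)\,\e(-nh/(qk)) \;=\; \frac{1}{\phi(qk)}\sum_{\eta\bmod qk} c_\eta\,(\chi\eta)(n),
\]
where each $\chi\eta$ is a Dirichlet character modulo $qk$ (with $\chi$ viewed as induced from modulus $q$) and $c_\eta := \sum_{(m,qk)=1}\overline{\eta}(m)\e(-mh/(qk))$. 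Under GRH, Lemma~\ref{lem:ultraclean}(iii) bounds the prime-sum contribution of each non-principal $\chi\eta$ by $O_{\chi,\xi,\euB,\eps}(X^{1/2+\eps})$. The product $\chi\eta$ is the principal character modulo $qk$ precisely when $\eta$ is the character modulo $qk$ induced by $\ochi$; for that $\eta$, RH gives $\sum_n\Lambda(n)\euB(n/qX) = qXC_\euB + O(X^{1/2+\eps})$. Consequently, the left-hand side of \eqref{eq:meowing} equals
\[
\frac{\tau(\ochi)\,G\,X\,C_\euB}{\phi(qk)} + O\!\bigl(X^{1/2+\eps}\bigr),\qquad G := \sum_{(m,qk)=1}\chi(m)\,\e(-mh/(qk)).
\]

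The crux of the proof is the evaluation of the Gauss-type sum $G$, which I would handle by M\"obius inversion. Writing $\one_{(m,k)=1} = \sum_{d\mid(m,k)}\mu(d)$ and substituting $m=dm'$ restricts the outer sum to divisors $d\mid k$ with $(d,q)=1$. For each such $d$, splitting the inner sum over $m'\bmod qk/d$ by the residue of $m'$ modulo $q$ produces an interior geometric sum $\sum_{t\bmod k/d}\e(-th/(k/d))$, which vanishes unless $(k/d)\mid h$; since $(h,k)=1$, this forces $d=k$. The surviving piece is the classical Gauss sum $\sum_{j\bmod q}\chi(j)\e(-jh/q) = \ochi(-h)\tau(\chi)$, which vanishes unless $(h,q)=1$ by primitivity of $\chi$. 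Assembling,
\[
G \;=\; \mu(k)\,\chi(k)\,\ochi(-h)\,\tau(\chi),
\]
a single expression that correctly vanishes whenever $k$ is not squarefree, $(k,q)>1$, or $(h,q)>1$. Combining with $\tau(\chi)\tau(\ochi)=\chi(-1)q$ and $\ochi(-h)\chi(-1)=\ochi(h)$, the main term simplifies to $q\,\ochi(h)\chi(k)\mu(k)\,X\,C_\euB/\phi(qk) = \widetilde C_{\chi,\xi}\,X\,C_\euB$, matching the definition in \eqref{eq:Cconsts} in every case. The principal obstacle is the M\"obius evaluation of $G$; once that is in hand, the rest of the argument is routine bookkeeping.
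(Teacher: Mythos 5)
Your argument is correct and follows essentially the same route as the paper: restrict to $(n,qk)=1$ via \eqref{eq:lambdarules}, expand in Dirichlet characters modulo $qk$, bound the nonprincipal contributions by Lemma~\ref{lem:ultraclean}\,$(iii)$, and extract the main term from the principal character using the smoothed prime-counting asymptotic under RH together with the twisted Gauss-type sum $G$, finishing with $\tau(\chi)\tau(\ochi)=\chi(-1)q$. The only difference is cosmetic: you evaluate $G=\ochi(-h)\chi(k)\mu(k)\tau(\chi)$ by hand via M\"obius inversion (correctly, including the degenerate cases), whereas the paper simply cites \cite[Theorem~9.12]{MontVau} for this sum and \cite[Lemma~2.3]{Banks2} for the smoothed PNT under RH.
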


\begin{proof}
Let $\xi\defeq h/k$ with $h,k>0$ and $(h,k)=1$.
Using \eqref{eq:lambdarules}, we see that
the sum in \eqref{eq:meowing} is equal to
\[\sum_{(n,qk)=1}\Lambda(n)\chi(n)\e(-nh/qk)\euB(n/qX)
+O_{\chi,\xi,\euB}(1).
\]
and the latter sum can be expressed as
\dalign{
&\ssum{a\bmod qk\\(a,qk)=1}\e(-ah/qk)\chi(a)
\sum_{n\equiv a\bmod qk}\Lambda(n)\euB(n/qX)\\
&\qquad\qquad=\frac{1}{\phi(qk)}\ssum{a\bmod qk\\(a,qk)=1}
\e(-ah/qk)\chi(a)\sum_{\psi\bmod qk}\opsi(a)
\sum_n\Lambda(n)\psi(n)\euB(n/qX),
}
where the middle sum runs over all characters $\psi$ modulo $qk$.
By Lemma~\ref{lem:ultraclean}\,$(iii)$
the contribution from all nonprincipal characters $\psi$
is $O_{\chi,\xi,\euB,\eps}(X^{1/2+\eps})$.
On the other hand, for the principal character $\psi_0$,
the contribution is
\[
\frac{C}{\phi(qk)}\ssum{(n,qk)=1}\Lambda(n)\euB(n/qX)
=\frac{C}{\phi(qk)}\sum_n\Lambda(n)\euB(n/qX)+O_{\chi,\xi,\euB}(1),
\]
where we used \eqref{eq:lambdarules} again, and
\[
C\defeq\ssum{a\bmod qk\\(a,qk)=1}\e(-ah/qk)\chi(a).
\]
By \cite[Theorem~9.12]{MontVau},
\[
C=\begin{cases}
\ochi(-h)\chi(k)\mu(k)\tau(\chi)
&\quad\hbox{if $(h,q)=1$},\\
0&\quad\hbox{otherwise},\\
\end{cases}
\]
and by \cite[Lemma 2.3]{Banks2},
\[
\sum_n\Lambda(n)\euB(n/qX)=C_\euB\cdot qX+O_{\chi,\euB}(X^{1/2}\log^2X).
\]
To finish the proof, observe that
\[
\frac{\tau(\ochi)}{q}\cdot\frac{C}{\phi(qk)}\cdot q
=\widetilde C_{\chi,\xi},
\]
which follows from the well known relation
\be\label{eq:tautauq}
\tau(\chi)\tau(\ochi)=\chi(-1)q
\ee
for the Gauss sums defined in \eqref{eq:red}.
\end{proof}

\section{Twisting the von Mangoldt function}
\label{sec:vonMangoldt-twist}

The results of this section are \emph{unconditional}.

\begin{theorem}\label{thm:vonMangoldt-twist}
For any $\xi\in\R^+$ and $T\ge 2q^2$, we have
\[
\ssum{\rho=\beta+i\gamma\\0<\gamma\le T}\xi^{-\rho}\cX_\ochi(1-\rho)
+\frac{\tau(\ochi)}{q}\sum_{n\le qT/2\pi\xi}\Lambda(n)\chi(n)\e(-n\xi/q)
\,\mathop{\ll}\limits_\xi\,(qT)^{1/2}\log^2T.
\]
where the first sum runs over complex zeros
$\rho=\beta+i\gamma$ of $L(s,\chi)$ $($counted with multiplicity$)$.
\end{theorem}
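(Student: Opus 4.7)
The plan is to apply the residue theorem to a rectangular contour enclosing the nontrivial zeros $\rho=\beta+i\gamma$ of $L(s,\chi)$ with $1\le\gamma\le T_*$, where $T_*\in[T,T+1]$ is chosen by Lemma~\ref{lem:trade} so that $|(L'/L)(\sigma\pm iT_*,\chi)|\ll\log^2 qT$ uniformly for $-1\le\sigma\le 2$. Setting $c\defeq 1+1/\log T$, integrate
\[
F(s)\defeq\frac{L'}{L}(s,\chi)\,\cX_\ochi(1-s)\,\xi^{-s}
\]
counterclockwise around the rectangle $R$ with vertices $1-c+i$, $c+i$, $c+iT_*$, and $1-c+iT_*$. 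By the residue theorem, $\frac{1}{2\pi i}\oint_R F(s)\,ds$ equals $\sum_{\rho\in R}\xi^{-\rho}\cX_\ochi(1-\rho)$ counted with multiplicity; the only potential pole of $F$ in the strip is at $s=1$ (when $\chi=\one$), which lies outside $R$. Since zeros of $L(s,\chi)$ with $\gamma\in(0,1)\cup(T,T_*]$ contribute at most $O_\xi((qT)^{1/2}\log qT)$, the residue sum may be identified with $\sum_{0<\gamma\le T}\xi^{-\rho}\cX_\ochi(1-\rho)$ up to acceptable error.

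On the right edge $\Re s=c$, the expansion $(L'/L)(s,\chi)=-\sum_n\Lambda(n)\chi(n)n^{-s}$ is absolutely convergent; interchanging sum and integral and applying Lemma~\ref{lem:ConGhoGon} with $v=n\xi$ and character $\ochi$ produces the principal piece $-\frac{\tau(\ochi)}{q}\sum_{n\le qT/2\pi\xi}\Lambda(n)\chi(n)\e(-n\xi/q)$, together with an aggregate error $\sum_n\Lambda(n)\chi(n)E(q,T_*,n\xi)$ and a discrepancy of order $\sqrt q\log T$ between the main-term range supplied by Lemma~\ref{lem:ConGhoGon} and the range appearing in the theorem. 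This latter discrepancy is absorbed since $T\ge 2q^2$. The aggregate error is the technical heart of the argument: the bound on $E(q,T,v)$ peaks near $v=n\xi\approx qT/2\pi$, precisely at the endpoint of the main-term range, so it must be handled by a dyadic decomposition according to the size of $|T-2\pi n\xi/q|$, combined with Chebyshev-type estimates for short sums of $\Lambda$. The sums over $n$ near and far from the endpoint are each $O_\xi((qT)^{1/2}\log^2 T)$.

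The top edge $\Im s=T_*$ is estimated using $|(L'/L)(s,\chi)|\ll\log^2 qT$ (Lemma~\ref{lem:trade}) and $|\cX_\ochi(1-s)|\ll q^{\sigma-1/2}T^{\sigma-1/2}$ (Lemma~\ref{lem:Xxexpansion}); the integrand is largest at $\sigma=c$, giving $O_\xi((qT)^{1/2}\log^2 T)$ over an interval of length $2c-1=O(1)$. The bottom edge is a short segment on which the integrand is bounded by a constant depending on $\chi$ and $\xi$, contributing only $O_{\chi,\xi}(1)$. For the left edge $\Re s=1-c$, apply the functional equation
\[
\frac{L'}{L}(s,\chi)=\frac{\cX_\chi'}{\cX_\chi}(s)-\frac{L'}{L}(1-s,\ochi).
\]
Differentiating the identity $\cX_\chi(s)\cX_\ochi(1-s)\equiv 1$ yields $(\cX_\chi'/\cX_\chi)(s)\cX_\ochi(1-s)=\cX_\ochi'(1-s)$, so the first piece of the integrand reduces to $\cX_\ochi'(1-s)\xi^{-s}$, whose integral is bounded by $O_\xi(T^{1/2})$ via an integration by parts together with Lemma~\ref{lem:Xxexpansion}. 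The second piece admits the absolutely convergent Dirichlet expansion $-(L'/L)(1-s,\ochi)=\sum_n\Lambda(n)\ochi(n)n^{s-1}$ since $\Re(1-s)=c>1$, and a trivial estimate using $|\cX_\ochi(1-s)|\ll q^{1/2-c}t^{1/2-c}$ from Lemma~\ref{lem:Xxexpansion} produces $O_\xi((qT)^{1/2}\log T)$. Combining the four edge estimates with the residue identification completes the proof; the main obstacle throughout is the careful dyadic analysis needed to bound $\sum_n\Lambda(n)\chi(n)E(q,T_*,n\xi)$ on the right edge.
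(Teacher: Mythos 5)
Your proposal follows essentially the same route as the paper: integrate $(L'/L)(s,\chi)\,\cX_\ochi(1-s)\,\xi^{-s}$ around a rectangle, evaluate the right edge $\Re s=c$ termwise with Lemma~\ref{lem:ConGhoGon} applied to $\ochi$, control the top edge by choosing its height via Lemma~\ref{lem:trade}, and bound the aggregate error $\sum_n\Lambda(n)E(q,T_*,n\xi)$ by splitting according to the size of $|T-2\pi n\xi/q|$ --- exactly the paper's $S_1,S_2,S_3$ decomposition, which you describe correctly (though only in outline) and which indeed yields $O_\xi((qT)^{1/2}\log^2T)$. The one genuinely different piece is the left edge: you put it at $\Re s=1-c$ and use $\tfrac{L'}{L}(s,\chi)=\tfrac{\cX_\chi'}{\cX_\chi}(s)-\tfrac{L'}{L}(1-s,\ochi)$ together with $\cX_\chi(s)\cX_\ochi(1-s)\equiv1$, whereas the paper pushes the edge to $\Re s=-1$ and quotes $\tfrac{L'}{L}(-1+it,\chi)\ll\log 2qt$. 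Both work; your version is the classical Gonek/Conrey--Ghosh--Gonek treatment, trading that auxiliary bound for an integration by parts on $\cX_\ochi'(1-s)$, and the estimates you claim there are correct.

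The bottom edge, however, needs repair. Taking its height to be exactly $1$ is not legitimate as stated: $L(s,\chi)$ may have a zero with ordinate $\gamma=1$ (or arbitrarily close to it), so the contour can pass through, or arbitrarily near, a pole of $L'/L$, and your claim that the integrand is "bounded by a constant depending on $\chi$ and $\xi$" has no justification at such a height. The fix is the paper's: choose the bottom height $t_\circ\in[2,3]$ by Lemma~\ref{lem:trade}, which gives the edge bound $\ll_\xi q^{1/2}\log^2 2q$ (the mismatch with the lower limit $c+i$ in Lemma~\ref{lem:ConGhoGon} costs only $O_\xi(q^{1/2})$ by Chebyshev). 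Relatedly, even after perturbing the height, an $O_{\chi,\xi}(1)$ bound falls short of the statement: the theorem asserts an implied constant depending only on $\xi$, uniformly in $q$ (this is why the hypothesis $T\ge 2q^2$ is there, so that quantities like $q^{3/2}\log^2 2q$ are absorbed into $(qT)^{1/2}\log^2T$). As written your argument proves the $\ll_{\chi,\xi}$ variant, which is all the paper uses later, but it is weaker than the theorem as stated; with the Lemma~\ref{lem:trade} choice of bottom height the stated uniformity is restored and the rest of your outline goes through.
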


\begin{proof}
For any $u>0$, let
\[
\Sigma_1(u)\defeq\ssum{\rho=\beta+i\gamma\\0<\gamma\le u}
\xi^{-\rho}\cX_\ochi(1-\rho),\qquad
\Sigma_2(u)\defeq\frac{\tau(\ochi)}{q}
\ssum{n\le qu/2\pi\xi}\Lambda(n)\chi(n)\e(-n\xi/q).
\]
Our goal is to show that
\be\label{eq:Sigsum}
\Sigma_1(T)+\Sigma_2(T)
\,\mathop{\ll}\limits_\xi\,(qT)^{1/2}\log^2T.
\ee

According to Lemma~\ref{lem:trade}, there is a number
$t_\circ\in[2,3]$ such that
\be\label{eq:LD-horiz-low}
\frac{L'}{L}(\sigma\pm it_\circ,\chi)\ll\log^22q\qquad(-1\le\sigma\le 2).
\ee
Let $t_\circ$ be fixed in what follows. Note that
\be\label{eq:gosh1}
\Sigma_1(t_\circ)\,\mathop{\ll}\limits_\xi\,q^{3/2}\log 2q,
\ee
since $|\xi^{-\rho}\cX_\ochi(1-\rho)|\ll_\xi q^{1/2}$ for all zeros
$\rho=\beta+i\gamma$ with $0<\gamma\le t_\circ$
(see Lemma~\ref{lem:Xxexpansion}),
and there are only $O(q\log 2q)$ such zeros
(see Siegel~\cite[Theorem~III]{Siegel}). By the Chebyshev
bound, we also have
\be\label{eq:gosh2}
\Sigma_2(t_\circ)\,\mathop{\ll}\limits_\xi\,q^{1/2}.
\ee

Using Lemma~\ref{lem:trade} again,
for any $T_*\ge 2q^2$, there is a number $T\in[T_*,T_*+1]$ for which
\be\label{eq:LD-horiz}
\frac{L'}{L}(\sigma\pm iT,\chi)\ll\log^2qT
\ll\log^2T\qquad(-1\le\sigma\le 2).
\ee
To prove \eqref{eq:Sigsum} in general, one can assume without
loss of generality that $T$ has the property \eqref{eq:LD-horiz}.
Indeed, let $T_*\ge 2q^2$ be
arbitrary, and suppose $T\in[T_*,T_*+1]$ satisfies both \eqref{eq:Sigsum}
and \eqref{eq:LD-horiz}. By Lemma~\ref{lem:Xxexpansion}, the bound
$\cX_\ochi(1-\rho)\ll (q\gamma)^{1/2}$
holds uniformly for all complex zeros $\rho=\beta+i\gamma$ of $L(s,\chi)$
with $\gamma\ge 1$; therefore,
\[
\big|\Sigma_1(T_*)-\Sigma_1(T)\big|\le
\ssum{\rho=\beta+i\gamma\\T_*<\gamma\le T_*+1}
\big|\xi^{-\rho}\cX_\ochi(1-\rho)\big|
\,\mathop{\ll}\limits_\xi\,(qT_*)^{1/2}\log T_*
\]
since the number of zeros with $T_*<\gamma\le T_*+1$
is at most $O(\log T_*)$. Moreover,
\[
\big|\Sigma_2(T_*)-\Sigma_2(T)\big|\le\frac{|\tau(\ochi)|}{q}
\ssum{qT_*/2\pi\xi<n\le q(T_*+1)/2\pi\xi}\big|\Lambda(n)\chi(n)\e(-n\xi/q)\big|\,\mathop{\ll}\limits_\xi\,q^{1/2}\log T_*.
\]
Combining the preceding two bounds and \eqref{eq:Sigsum}, we deduce that
\[
\Sigma_1(T_*)+\Sigma_2(T_*)
\,\mathop{\ll}\limits_\xi\,(qT_*)^{1/2}\log^2T_*,
\]
which shows that \eqref{eq:Sigsum} holds with $T_*$ in place of $T$.

From now on, we can assume that $T$ satisfies \eqref{eq:LD-horiz}.
Put $c\defeq 1+\frac{1}{\log qT}$,
and let $\cC$ be the following rectangle in $\C$:
\[
c+it_\circ
~~\longrightarrow~~c+iT
~~\longrightarrow~~-1+iT
~~\longrightarrow~~-1+it_\circ
~~\longrightarrow~~c+it_\circ.
\]
Note that, by \eqref{eq:LD-horiz-low} and \eqref{eq:LD-horiz},
neither $t_\circ$ nor $T$ is the ordinate of a zero of $L(s,\chi)$.
By Cauchy's theorem, 
\dalign{
&\Sigma_1(T)-\Sigma_1(t_\circ)
=\frac{1}{2\pi i}\oint_{\cC}\frac{L'}{L}(s,\chi)\,\xi^{-s}\cX_\ochi(1-s)\,ds\\
&\qquad\qquad=\frac{1}{2\pi i}\bigg(\int_{c+it_\circ}^{c+iT}
+\int_{c+iT}^{-1+iT}
+\int_{-1+iT}^{-1+it_\circ}
+\int_{-1+it_\circ}^{c+it_\circ}\bigg)
\frac{L'}{L}(s,\chi)\,\xi^{-s}\cX_\ochi(1-s)\,ds\\
&\qquad\qquad=I_1+I_2+I_3+I_4\quad\text{(say)},
}
and so by \eqref{eq:gosh1}, we have
\be\label{eq:Ibegin}
\Sigma_1(T)=I_1+I_2+I_3+I_4+O_\xi(q^{3/2}\log 2q).
\ee
We estimate the four integrals $I_j$ separately.

Noting that
\[
\frac{L'}{L}(s,\chi)=-\sum_n\frac{\Lambda(n)\chi(n)}{n^s}
\qquad(\sigma>1),
\]
and applying Lemma~\ref{lem:ConGhoGon}, we get that
\dalign{
I_1&=-\sum_n\Lambda(n)\chi(n)\cdot
\frac{1}{2\pi i}\int_{c+it_\circ}^{c+iT}
(n\xi)^{-s}\cX_\ochi(1-s)\,ds\\
&=-\frac{\tau(\ochi)}{q}
\sum_{qt_\circ/2\pi\xi<n\le qT/2\pi\xi}\Lambda(n)\chi(n)\e(-n\xi/q)
-\sum_n\Lambda(n)\chi(n)E(q,T,n\xi)\\
&=\Sigma_2(t_\circ)-\Sigma_2(T)
+O\bigg(\frac{(qT)^{c-1/2}}{\xi^c}(E_1+E_2)\bigg),
}
where
\[
E_1\defeq\sum_n\frac{\Lambda(n)}{n^c}
\mand
E_2\defeq\sum_n\frac{\Lambda(n)}{n^c}
\frac{T}{|T-2\pi n\xi/q|+T^{1/2}}.
\]
Note that $(qT)^{c-1/2}\asymp (qT)^{1/2}$, and recall that
$\Sigma_2(t_\circ)$ satisfies \eqref{eq:gosh2}.
Also,
\be\label{eq:brilliant}
E_1=-\frac{\zeta'}{\zeta}(c)\ll \frac{1}{c-1}=\log qT\ll\log T.
\ee
To bound $E_2$, we split the integers $n\ge 2$ into
three disjoint sets:
\dalign{
S_1&\defeq\{n\ge 2:|T-2\pi n\xi/q|>\tfrac12T\},\\
S_2&\defeq\{n\ge 2:|T-2\pi n\xi/q|\le T^{1/2}\},\\
S_3&\defeq\{n\ge 2:T^{1/2}<|T-2\pi n\xi/q|\le \tfrac12T\}.
}
Using \eqref{eq:brilliant}, we have
\[
\sum_{n\in S_1}\frac{\Lambda(n)}{n^c}\frac{T}{|T-2\pi n\xi/q|+T^{1/2}}
\ll\log T,
\]
which is acceptable. For each $n\in S_2$, we have
\[
n\,\mathop{\asymp}\limits_\xi\,qT,\qquad
\frac{\Lambda(n)}{n^c}
\,\mathop{\ll}\limits_\xi\,\frac{\log qT}{(qT)^c}
\ll\frac{\log T}{qT},\qquad
\frac{T}{|T-2\pi n\xi/q|+T^{1/2}}
\asymp T^{1/2}.
\]
Since $|S_2|\ll_\xi qT^{1/2}$, it follows that
\[
\sum_{n\in S_2}\frac{\Lambda(n)}{n^c}\frac{T}{|T-2\pi n\xi/q|+T^{1/2}}
\,\mathop{\ll}\limits_\xi\,\log T.
\]
Similarly,
\[
\sum_{n\in S_3}\frac{\Lambda(n)}{n^c}\frac{T}{|T-2\pi n\xi|+T^{1/2}}
\,\mathop{\ll}\limits_\xi\,\frac{\log T}{q}
\sum_{n\in S_3}\frac{1}{|T-2\pi n\xi/q|+T^{1/2}}.
\]
The last sum is bounded by
\[
\ll\sum_{T^{1/2}\le k\le \frac12T}
\ssum{n\ge 1\\k<|T-2\pi n\xi/q|\le k+1}\frac{1}{|T-2\pi n\xi/q|+T^{1/2}}
\,\mathop{\ll}\limits_\xi\,
\sum_{T^{1/2}\le k\le \frac12T}\frac{q}{k}
\ll q\log T,
\]
hence
\[
\sum_{n\in S_3}\frac{\Lambda(n)}{n^c}\frac{T}{|T-2\pi n\xi|+T^{1/2}}
\,\mathop{\ll}\limits_\xi\,\log^2T.
\]
Putting everything together, we deduce that
\[
I_1=-\Sigma_2(T)+O_\xi\big((qT)^{1/2}\log^2T\big).
\]

Next, by Lemma~\ref{lem:Xxexpansion}, we have the uniform bound
\be\label{eq:oralb}
\cX_\ochi(1-\sigma-it)\ll (qt)^{\sigma-1/2}
\qquad(-1\le \sigma\le c,~t\ge 1).
\ee
Recalling \eqref{eq:LD-horiz}, it follows that
\[
I_2=-\frac{1}{2\pi i}\int_{-1+iT}^{c+iT}
\frac{L'}{L}(s,\chi)\,\xi^{-s}\cX_\ochi(1-s)\,ds
\,\mathop{\ll}\limits_\xi\,(qT)^{1/2}\log^2T.
\]
Similarly, by \cite[Lemmas~12.4 and 12.9]{MontVau}, we have the bound
\[
\frac{L'}{L}(-1+it,\chi)\ll\log 2qt\qquad(1\le t\le T).
\]
Taking $\sigma\defeq -1$ in \eqref{eq:oralb}, we get that
\[
I_3=-\frac{1}{2\pi i}\int_{-1+it_\circ}^{-1+iT}
\frac{L'}{L}(s,\chi)\,\xi^{-s}\cX_\ochi(1-s)\,ds
\ll\xi^{-1}\int_{t_\circ}^T \frac{\log 2qt}{(qt)^{3/2}}\,dt
\,\mathop{\ll}\limits_\xi\,1.
\]
Finally, using \eqref{eq:oralb} and \eqref{eq:LD-horiz-low},
we get that
\[
I_4=\frac{1}{2\pi i}\int_{-1+it_\circ}^{c+it_\circ}
\frac{L'}{L}(s,\chi)\,\xi^{-s}\cX_\ochi(1-s)\,ds
\,\mathop{\ll}\limits_\xi\,q^{1/2}\log^22q.
\]

Combining \eqref{eq:Ibegin} and the above estimates for the integrals $I_j$,
we obtain \eqref{eq:Sigsum}, and the proof is complete.
\end{proof}

Let $\log_+u\defeq\max\{\log u,1\}$ for all $u>0$.

\begin{corollary}\label{cor:vonMangoldt-twist}
For any $\xi\in\R^+$ and $T>0$, we have
\[
\ssum{\rho=\beta+i\gamma\\0<\gamma\le T}\xi^{-\rho}\cX_\ochi(1-\rho)
+\frac{\tau(\ochi)}{q}\sum_{n\le qT/2\pi\xi}\Lambda(n)\chi(n)\e(-n\xi/q)
\,\mathop{\ll}\limits_{\chi,\xi}\,T^{1/2}\log_+^2T.
\]
where the first sum runs over complex zeros
$\rho=\beta+i\gamma$ of $L(s,\chi)$ $($counted with multiplicity$)$.
\end{corollary}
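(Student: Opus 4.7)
The plan is to split the range of $T$ into two cases and reduce to Theorem~\ref{thm:vonMangoldt-twist}.

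For $T\ge 2q^2$, Theorem~\ref{thm:vonMangoldt-twist} gives a bound of $\ll_\xi(qT)^{1/2}\log^2T$ for the left-hand side. Since $q$ is determined by $\chi$, I would absorb $q^{1/2}$ into the implied constant, obtaining $\ll_{\chi,\xi}T^{1/2}\log^2T\le T^{1/2}\log_+^2T$, which is the desired bound.

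For $0<T<2q^2$, $T$ is bounded above by a constant depending only on $\chi$, and I would bound both terms on the left trivially. The sum over zeros contains at most $O_\chi(1)$ terms (the zero set up to any bounded height is finite), and by Lemma~\ref{lem:Xxexpansion} when $\gamma\ge 1$ or by direct calculation from \eqref{eq:barchetta} when $0<\gamma<1$, each term satisfies $|\xi^{-\rho}\cX_\ochi(1-\rho)|\ll_{\chi,\xi}1$. The von Mangoldt sum has at most $qT/2\pi\xi\ll_{\chi,\xi}1$ nonzero terms, each of size $O_{\chi,\xi}(1)$. Thus the full left-hand side is $\ll_{\chi,\xi}1$.

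To upgrade this $O_{\chi,\xi}(1)$ estimate to the required $\ll_{\chi,\xi}T^{1/2}\log_+^2T$, I would observe that if either sum is nonempty then $T$ is bounded below by a positive constant $c_{\chi,\xi}$ --- the smaller of the smallest positive ordinate of a zero of $L(s,\chi)$ and the threshold $4\pi\xi/q$ below which no $n\ge 2$ satisfies $n\le qT/2\pi\xi$ --- and in that range $T^{1/2}\log_+^2T\ge c_{\chi,\xi}^{1/2}$. If both sums are empty the bound is trivial. I do not anticipate any substantial obstacle: this is a routine extension of Theorem~\ref{thm:vonMangoldt-twist} to all $T>0$, the only mild subtlety being the regime $T<e$ where $\log_+^2T=1$, which is resolved by the observation just made.
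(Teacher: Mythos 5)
Your proposal is correct and follows what the paper leaves implicit: for $T\ge 2q^2$ one simply quotes Theorem~\ref{thm:vonMangoldt-twist} and absorbs the fixed $q^{1/2}$ into the $\chi$-dependent constant, while for $T<2q^2$ both sums are finite and $O_{\chi,\xi}(1)$, and your observation that the left-hand side vanishes unless $T$ exceeds a positive constant depending on $\chi$ and $\xi$ (so that $T^{1/2}\log_+^2T\gg_{\chi,\xi}1$) correctly disposes of the only delicate point, the regime of small $T$. This is essentially the same deduction the paper intends, just written out in full.
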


\begin{corollary}\label{cor:vonMangoldt-twist}
For any $\xi\in\R^+$, $\euB\in C_c^\infty(\R^+)$, and $X>0$,
we have
\[
\ssum{\rho=\beta+i\gamma}\xi^{-\rho}
\cX_\ochi(1-\rho)\euB\Big(\frac{\gamma}{2\pi\xi X}\Big)
+\frac{\tau(\ochi)}{q}
\sum_n\Lambda(n)\chi(n)\e(-n\xi/q)\euB(n/qX)
\,\mathop{\ll}\limits_{\chi,\xi,\euB}\,X^{1/2}\log_+^2X
\]
where the first sum runs over the complex zeros
$\rho=\beta+i\gamma$ of $L(s,\chi)$ $($counted with multiplicity$)$.
\end{corollary}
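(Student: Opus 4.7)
The plan is to deduce the smoothed estimate from the preceding sharp-cutoff corollary by Abel summation, arranged so that both the zero sum and the von Mangoldt sum become integrals against a common kernel.

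To set things up, I would define the combined partial sum
\[
S(T)\defeq\ssum{\rho=\beta+i\gamma\\0<\gamma\le T}\xi^{-\rho}\cX_\ochi(1-\rho)
+\frac{\tau(\ochi)}{q}\ssum{n\le qT/2\pi\xi}\Lambda(n)\chi(n)\e(-n\xi/q),
\]
so that the previous corollary yields the uniform bound $|S(T)|\ll_{\chi,\xi}T^{1/2}\log_+^2 T$ for every $T>0$.

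Next, I would introduce the auxiliary weight $\phi(T)\defeq\euB(T/(2\pi\xi X))$ and observe the matching identity $\euB(n/qX)=\phi(2\pi\xi n/q)$, which is arranged precisely so that the natural variable in both sums is $T$. Then I would apply Abel summation to each of the two smoothed sums; in the integral coming from the von Mangoldt sum I would perform the substitution $T=2\pi\xi n/q$, bringing the integrand onto the same variable as in the zero-sum integral. Since $\phi$ is smooth with compact support in $\R^+$ the boundary contributions vanish, and adding the two identities shows that the quantity to be estimated equals
\[
-\int_0^\infty S(T)\,\phi'(T)\,dT.
\]

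Finally, I would bound this integral by the triangle inequality. If $\euB$ is supported in $[a,b]\subset\R^+$, then $\phi'$ is supported in $[2\pi\xi aX,\,2\pi\xi bX]$ with $\|\phi'\|_\infty\le\|\euB'\|_\infty/(2\pi\xi X)$; on that interval $T\asymp_{\xi,\euB} X$, so the pointwise bound gives $|S(T)|\ll_{\chi,\xi,\euB} X^{1/2}\log_+^2X$, and the integral is $O_{\chi,\xi,\euB}(X^{1/2}\log_+^2 X)$, as required. The small-$X$ case is immediate, because $\log_+\ge 1$ and the support of $\phi'$ shrinks proportionally to $X$, giving at most $O(X^{1/2})$.

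The argument is routine; the only point that requires attention is the rescaling $n\leftrightarrow qT/(2\pi\xi)$ that puts both sums onto the common variable $T$ in the Abel summation step. That correspondence is exactly the one already built into the statement of the previous corollary, so no real obstacle arises beyond this bookkeeping.
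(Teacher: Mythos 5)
Your proposal is correct and follows essentially the same route as the paper: both pass from the sharp-cutoff corollary to the smoothed statement by partial (Riemann--Stieltjes/Abel) summation against the weight $\euB$, reduce everything to an integral of the combined partial sum against the derivative of the weight, and conclude using the compact support of $\euB$ in $\R^+$ together with the pointwise bound $T^{1/2}\log_+^2T$. The only difference is cosmetic: the paper substitutes to the normalized variable $u=T/(2\pi\xi X)$ before estimating, while you estimate directly in the variable $T$.
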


\begin{proof}
Let $\Sigma_1$ and $\Sigma_2$ be the step functions defined in 
the proof of Theorem~\ref{thm:vonMangoldt-twist},
and put
\[
\Sigma_3(u)\defeq\Sigma_2(2\pi\xi u/q)=\frac{\tau(\ochi)}{q}
\ssum{n\le u}\Lambda(n)\chi(n)\e(-n\xi/q).
\]
Applying Corollary~\ref{cor:vonMangoldt-twist}
with $T\defeq 2\pi\xi Xu$, we have
\be\label{eq:lawnmower}
\Sigma_1(2\pi\xi Xu)+\Sigma_2(2\pi\xi Xu)
\,\mathop{\ll}\limits_{\chi,\xi}\,(Xu)^{1/2}\log_+^2(Xu).
\ee

Using Riemann-Stieltjes integration, we have
\dalign{
&\ssum{\rho=\beta+i\gamma}\xi^{-\rho}
\cX_\ochi(1-\rho)\euB\Big(\frac{\gamma}{2\pi\xi X}\Big)
=\int_0^\infty\euB\Big(\frac{u}{2\pi\xi X}\Big)\,d\Sigma_1(u)\\
&\qquad\qquad=\int_0^\infty\euB(u)\,d\Sigma_1(2\pi\xi Xu)
=-\int_0^\infty\euB'(u)\Sigma_1(2\pi\xi Xu)\,du,
}
and
\dalign{
&\frac{\tau(\ochi)}{q}\sum_n\Lambda(n)\chi(n)\e(-n\xi/q)\euB(n/qX)
=\int_0^\infty\euB(u/qX)\,d\Sigma_3(u)\\
&\qquad\qquad=\int_0^\infty\euB(u)\,d\Sigma_3(qXu)
=-\int_0^\infty\euB'(u)\Sigma_3(qXu)\,du\\
&\qquad\qquad=-\int_0^\infty\euB'(u)\Sigma_2(2\pi\xi Xu)\,du.
}
Summing these expressions and using \eqref{eq:lawnmower},
the result follows.
\end{proof}

\section{Proof of Theorem~\ref{thm:RHvsGRH}}

Throughout the proof, we fix $q\ge 1$ and a primitive character
$\chi$ modulo $q$, and thus, any implied constants in the
symbols $\ll$, $O$, etc., are independent of $\chi$ (and $q$).
In particular, $q\ll 1$.

First, assume GRH. Then ${\tt GRH}[\chi]$ is true. 
For any $\xi\in\Q^+$, $\euB\in C_c^\infty(\R^+)$, and $\eps>0$,
we have by Lemma~\ref{lem:aloevera}:
\be\label{eq:2112}
\frac{\tau(\ochi)}{q}\sum_n\Lambda(n)\chi(n)\e(-n\xi/q)\euB(n/qX)
=C_\euB\widetilde C_{\chi,\xi} X
+O_{\xi,\euB,\eps}(X^{1/2+\eps}).
\ee
Also, Corollary~\ref{cor:vonMangoldt-twist} states that
\[
\ssum{\rho=\frac12+i\gamma}\xi^{-\rho}
\cX_\ochi(1-\rho)\euB\Big(\frac{\gamma}{2\pi\xi X}\Big)
+\frac{\tau(\ochi)}{q}
\sum_n\Lambda(n)\chi(n)\e(-n\xi/q)\euB(n/qX)
\,\mathop{\ll}\limits_{\xi,\euB,\eps}\,X^{1/2+\eps}.
\]
Combining these results, we obtain \eqref{eq:superbound},
which proves the validity of ${\tt GRH}^\dagger[\chi]$.

Conversely, suppose ${\tt GRH}^\dagger[\chi]$ is true.
Then ${\tt GRH}[\chi]$ is true, and for any
$\xi\in\Q^+$, $\euB\in C_c^\infty(\R^+)$, and $\eps>0$, 
we have
\[
\ssum{\rho=\frac12+i\gamma}\xi^{-\rho}
\cX_\ochi(1-\rho)\euB\Big(\frac{\gamma}{2\pi\xi X}\Big)
+C_\euB\widetilde C_{\chi,\xi}X
\,\mathop{\ll}\limits_{\xi,\euB,\eps}\,X^{1/2+\eps}
\]
Using Corollary~\ref{cor:vonMangoldt-twist} again, it follows that
\eqref{eq:2112} holds for our character $\chi$.

Now let $\psi$ be an arbitrary primitive character of
modulus $\tilde q\ge 1$, and let $\vartheta$ be the character modulo
$q\tilde q$  defined by $\vartheta(n)\defeq\psi(n)\ochi(n)$
for all $n\in\Z$. Put
\be\label{eq:W1}
\cW\defeq\tau(\otheta)\sum_n\Lambda(n)\psi(n)\euB(n/X).
\ee
Note that (see, e.g., \cite[Theorem~9.10]{MontVau})
\be\label{eq:tauthetareln}
\tau(\otheta)=\chi(\tilde q)\mu(\tilde q)\tau(\chi).
\ee
Using \eqref{eq:lambdarules}, we have
\[
\cW=\tau(\otheta)\sum_{(n,q\tilde q)=1}
\Lambda(n)\vartheta(n)\chi(n)\euB(n/X)
+O_{\psi,\euB}(1).
\]
When $(n,q\tilde q)=1$, we have (see \cite[Theorem~9.5]{MontVau})
\[
\vartheta(n)\tau(\otheta)
=\sum_{a\bmod q\tilde q}\otheta(a)\e(an/q\tilde q)
=\ssum{1\le a\le q\tilde q\\(a,q\tilde q)=1}
\otheta(-a)\e(-an/q\tilde q),
\]
and so we get that
\[
\cW=\ssum{1\le a\le q\tilde q\\(a,q\tilde q)=1}\otheta(-a)\sum_{(n,q\tilde q)=1}
\Lambda(n)\chi(n)\e(-an/q\tilde q)\euB(n/X)
+O_{\psi,\euB}(1).
\]
Applying \eqref{eq:2112} with $X/q$ instead of $X$,
and $\xi\defeq a/\tilde q$
for $1\le a\le q\tilde q$ with $(a,q\tilde q)=1$, we derive that
\[
\cW=\frac{C_\euB X}{\tau(\ochi)}\ssum{1\le a\le q\tilde q\\(a,q\tilde q)=1}\otheta(-a)
\widetilde C_{\chi,a/\tilde q}
+O_{\psi,\euB,\eps}(X^{1/2+\eps}).
\]
Finally, when $(a,q\tilde q)=1$, we have
\[
\otheta(-a)\widetilde C_{\chi,\xi}
=\opsi(-a)\chi(-a)\cdot
\frac{\ochi(a)\chi(\tilde q)\mu(\tilde q)\,q}{\phi(q\tilde q)}
=\opsi(-a)\cdot
\frac{\tau(\otheta)\tau(\ochi)}{\phi(q\tilde q)},
\]
where we used the relations \eqref{eq:tautauq} and \eqref{eq:tauthetareln}
in the last step; consequently,
\be\label{eq:W2}
\cW=C_\euB\,
\frac{\tau(\otheta)}{\phi(q\tilde q)}\,X
\ssum{1\le a\le q\tilde q\\(a,q\tilde q)=1}\opsi(-a)
+O_{\psi,\euB,\eps}(X^{1/2+\eps}).
\ee

Comparing \eqref{eq:W1} and \eqref{eq:W2}, we conclude that
the estimates
\be\label{eq:RHcrit}
\sum_n\Lambda(n)\euB(n/X)=
C_\euB X+O_{\euB,\eps}(X^{1/2+\eps})
\ee
and
\[
\sum_n\Lambda(n)\psi(n)\euB(n/X)
\,\mathop{\ll}\limits_{\psi,\euB,\eps}\,X^{1/2+\eps}
\qquad(\psi\ne\one)
\]
hold for all choices of $\euB\in C_c^\infty(\R^+)$ and $\eps>0$.
Since \eqref{eq:RHcrit} implies RH,
this verifies statement Lemma~\ref{lem:ultraclean}\,$(ii)$,
and by the lemma, GRH follows.

\end{document}